\newcommand{\A}{{\cal A}}
\newcommand{\Ap}[1][]{A_p\, #1}
\newcommand{\Bp}[1][]{B_p\, #1}
\newcommand{\bdry}[1]{\partial #1}
\newcommand{\D}{{\cal D}}
\newcommand{\dualp}[3][]{\left(#2,#3\right)_{#1}}
\newcommand{\F}{{\cal F}}
\newcommand{\incl}{\subset}
\newcommand{\N}{\mathbb N}
\newcommand{\norm}[2][]{\left\|#2\right\|_{#1}}
\renewcommand{\o}{\text{o}}
\newcommand{\PS}[1]{$(\text{PS})_{#1}$}
\newcommand{\R}{\mathbb R}
\newcommand{\RP}{\R \text{P}}
\newcommand{\seq}[1]{\left(#1\right)}
\newcommand{\set}[1]{\left\{#1\right\}}
\newcommand{\vol}[1]{\left|#1\right|}
\newcommand{\Z}{\mathbb Z}
\newenvironment{enumroman}{\begin{enumerate}

}{\end{enumerate}}
\newtheorem{corollary}{Corollary}[section]
\newtheorem{theorem}[corollary]{Theorem}
\theoremstyle{remark}
\newtheorem{remark}[corollary]{Remark}
\numberwithin{equation}{section}
\title{\bf Abstract multiplicity theorems and applications to critical growth problems\thanks{{\em MSC2010:} Primary 58E05, Secondary 35J92, 35B33
\newline \indent\; {\em Key Words and Phrases:} Abstract multiplicity theorems, critical growth $p$-Laplacian and $(p,q)$-Laplacian problems, arbitrarily many solutions, eigenvalues based on the cohomological index, piercing property}}
\author{\bf Kanishka Perera\\
Department of Mathematical Sciences\\
Florida Institute of Technology\\
150 W University Blvd, Melbourne, FL 32901-6975, USA\\
\em kperera@fit.edu}
\date{}
\begin{document}

\maketitle

\begin{abstract}
We prove some abstract multiplicity theorems that can be used to obtain multiple nontrivial solutions of critical growth $p$-Laplacian and $(p,q)$-Laplacian type problems. We show that the problems considered here have arbitrarily many solutions for all sufficiently large values of a certain parameter $\lambda > 0$. In particular, the number of solutions goes to infinity as $\lambda \to \infty$. Moreover, we give an explicit lower bound on $\lambda$ in order to have a given number of solutions. This lower bound is in terms of a sequence of eigenvalues constructed using the $\Z_2$-cohomological index. This is a consequence of the fact that our abstract multiplicity results make essential use of the piercing property of the cohomological index, which is not shared by the genus.
\end{abstract}

\section{Introduction}

In the celebrated paper \cite{MR709644}, Br{\'e}zis and Nirenberg showed that the problem
\begin{equation} \label{1.5}
\left\{\begin{aligned}
- \Delta u & = \lambda\, |u|^{r - 2}\, u + |u|^{2^\ast - 2}\, u && \text{in } \Omega\\[10pt]
u & = 0 && \text{on } \bdry{\Omega},
\end{aligned}\right.
\end{equation}
where $\Omega$ is a bounded domain in $\R^N$, $N \ge 3$, $2 < r < 2^\ast$, $2^\ast = 2N/(N - 2)$ is the critical Sobolev exponent, and $\lambda > 0$ is a parameter, has a positive solution for all sufficiently large $\lambda > 0$ when $N = 3$ and for all $\lambda > 0$ when $N \ge 4$. There is now a large literature extending this result to various critical growth elliptic problems. In particular, Garc{\'{\i}}a Azorero and Peral Alonso have shown in \cite{MR1083144,MR912211} that the corresponding $p$-Laplacian problem
\begin{equation} \label{1.1}
\left\{\begin{aligned}
- \Delta_p u & = \lambda\, |u|^{r - 2}\, u + |u|^{p^\ast - 2}\, u && \text{in } \Omega\\[10pt]
u & = 0 && \text{on } \bdry{\Omega},
\end{aligned}\right.
\end{equation}
where $1 < p < N$, $p < r < p^\ast$, and $p^\ast = Np/(N - p)$, has a nontrivial nonnegative solution for all sufficiently large $\lambda > 0$ when $N < p^2$ and for all $\lambda > 0$ when $N \ge p^2$. More recently, Barrios et al.\! \cite{MR3390088} have extended the Br{\'e}zis-Nirenberg result to critical nonlocal problems involving the fractional Laplacian operator and Biagi et al.\! \cite{BiDiVaVe} have extended it to critical mixed local-nonlocal problems.

On the other hand, it follows from a result of Costa and Wang \cite[Theorem 2]{MR2113928} that the number of solutions of problem \eqref{1.5} goes to infinity as $\lambda \to \infty$. The proof of this result is based on a priori estimates for solutions and makes use of certain orthogonal decompositions of $H^1_0(\Omega)$. These decompositions use eigenspaces of the linear operator $- \Delta$ on $H^1_0(\Omega)$. Consequently, this approach to multiplicity does not extend to problem \eqref{1.1}, where the nonlinear operator $- \Delta_p$ has no linear eigenspaces.

In the present paper we prove several abstract multiplicity theorems that are not based on splittings of the underlying space into linear subspaces and therefore can be used to obtain multiple nontrivial solutions of critical growth problems such as \eqref{1.1}. Our abstract results will allow us to show that, given any $m \in \N$, problem \eqref{1.1} has $m$ distinct pairs of nontrivial solutions for all sufficiently large $\lambda > 0$, with an explicit lower bound on $\lambda$. This lower bound will be in terms of an unbounded sequence $\seq{\lambda_m}$ of Dirichlet eigenvalues of $- \Delta_p$ in $\Omega$ introduced in Perera \cite{MR1998432} that is based on the $\Z_2$-cohomological index of Fadell and Rabinowitz \cite{MR0478189} (see \eqref{6}). More precisely, we have the following theorem.

\begin{theorem} \label{Theorem 1}
If $1 < p < N$ and $p < r < p^\ast$, then problem \eqref{1.1} has $m$ distinct pairs of nontrivial solutions $\pm u^\lambda_1,\dots,\pm u^\lambda_m$ for all
\begin{equation} \label{1.2}
\lambda > r \vol{\Omega}^{r/p - 1}\, \sup_{\tau > 0} \left[\frac{\lambda_m}{p \tau^{r - p}} - \frac{S^{N/p}}{N \tau^r} - \frac{\tau^{p^\ast - r}}{p^\ast \vol{\Omega}^{p/(N - p)}}\right],
\end{equation}
where $\vol{\Omega}$ denotes the volume of $\Omega$ and
\[
S = \inf_{u \in \D^{1,\,p}(\R^N) \setminus \set{0}}\, \frac{\int_{\R^N} |\nabla u|^p\, dx}{\left(\int_{\R^N} |u|^{p^\ast} dx\right)^{p/p^\ast}}
\]
is the best Sobolev constant. In particular, the number of solutions of problem \eqref{1.1} goes to infinity as $\lambda \to \infty$.
\end{theorem}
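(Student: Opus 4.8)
The natural approach is variational: solutions of \eqref{1.1} are critical points of the energy functional
\[
E_\lambda(u) = \frac{1}{p} \int_\Omega |\nabla u|^p\, dx - \frac{\lambda}{r} \int_\Omega |u|^r\, dx - \frac{1}{p^\ast} \int_\Omega |u|^{p^\ast} dx
\]
on $W^{1,\,p}_0(\Omega)$. The plan is to apply one of the abstract multiplicity theorems of the paper to $E_\lambda$. Since $E_\lambda$ is even, the relevant abstract result should produce $m$ pairs of critical points provided one can exhibit a suitable symmetric set — sandwiched between sublevel/superlevel sets or linked via the cohomological index — on which $E_\lambda$ stays strictly below the first threshold $c^\ast = \frac{1}{N}\, S^{N/p}$ at which the $(\text{PS})$ condition for critical growth problems is known to fail. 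The key feature, as the introduction stresses, is the piercing property of the index: one uses the eigenvalue sequence $\seq{\lambda_m}$ from \cite{MR1998432}, which is defined so that the sublevel set $\set{u : \int_\Omega |\nabla u|^p = 1,\ \int_\Omega |u|^p \ge 1/\lambda_m}$ (or a cone over it) has cohomological index $\ge m$, to construct the linking set.

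First I would recall from \cite{MR1998432} the variational characterization of $\lambda_m$ via the cohomological index and fix a symmetric subset $M$ of the unit sphere in $W^{1,\,p}_0(\Omega)$ with $\text{index}(M) \ge m$ on which $\int_\Omega |u|^p\, dx \ge 1/\lambda_m$. Next I would estimate $\sup_{u \in M,\ t > 0} E_\lambda(tu)$: for $u$ in this set, $\int_\Omega |\nabla u|^p = 1$, and by Hölder $\int_\Omega |u|^r \ge \vol{\Omega}^{1 - r/p}\, \big(\int_\Omega |u|^p\big)^{r/p} \ge \vol{\Omega}^{1 - r/p}\, \lambda_m^{-r/p}$, while $\int_\Omega |u|^{p^\ast} \le S^{-p^\ast/p}$ by the Sobolev inequality (extending by zero and using the best constant $S$). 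Hence along the ray $t \mapsto tu$,
\[
E_\lambda(tu) \le \frac{t^p}{p} - \frac{\lambda\, \vol{\Omega}^{1 - r/p}}{r\, \lambda_m^{r/p}}\, t^r - \frac{t^{p^\ast}}{p^\ast S^{p^\ast/p}}.
\]
The substitution $\tau = t^{-1}\vol{\Omega}^{?}$ (matching the normalization in \eqref{1.2}) turns the requirement that this supremum be $< \frac{1}{N} S^{N/p}$ into exactly the inequality \eqref{1.2} on $\lambda$; this is a one-variable optimization, and I would carry it out by writing the bound as a function of $t$ and extremizing, then rescaling to recover the $\sup_{\tau > 0}$ form stated.

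Then I would invoke the abstract theorem: $E_\lambda$ is even, bounded below near $0$ on small spheres with a mountain-pass-type geometry (for $p < r$ the term $-\frac{\lambda}{r}\int |u|^r$ is higher order at the origin, so $0$ is a strict local minimum and $E_\lambda$ is positive on a small sphere), and the linking set $M$ (suitably scaled) gives index $\ge m$ while keeping $E_\lambda < c^\ast$; by the deformation lemma, valid below $c^\ast$ where $(\text{PS})_c$ holds, the abstract result yields $m$ distinct critical levels in $(0, c^\ast)$, hence $m$ pairs $\pm u^\lambda_1, \dots, \pm u^\lambda_m$ of nontrivial solutions. Finally, $\lambda_m \to \infty$ forces the right-hand side of \eqref{1.2} to be finite for each fixed $m$ but the threshold grows with $m$, so for each $\lambda$ there is a largest $m$ satisfying \eqref{1.2}, and this $m \to \infty$ as $\lambda \to \infty$.

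The main obstacle I anticipate is not the abstract machinery or the $(\text{PS})$ analysis — both are quotable from the paper and from the standard Brézis–Nirenberg/Garc\'ia Azorero–Peral theory — but rather arranging the geometry so that the \emph{same} symmetric set simultaneously witnesses the index lower bound $m$ (forcing the existence of $m$ levels) \emph{and} stays strictly under $c^\ast$; this is precisely where the piercing property is essential, since one needs the index of the relevant sublevel set of $E_\lambda$ to drop by exactly the right amount as one crosses $c^\ast$. Getting the constants to line up so that the threshold for "$\sup E_\lambda < c^\ast$ on a set of index $m$" is exactly \eqref{1.2} — in particular producing the $\sup_{\tau>0}$ rather than a single optimized constant — will require care with the Hölder and Sobolev normalizations and with how $\lambda_m$ is normalized in \cite{MR1998432}.
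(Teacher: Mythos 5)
Your overall strategy is the paper's: work with the even functional $E_\lambda$, take $c^\ast = S^{N/p}/N$ as the \PS{c} threshold, build an index-$m$ set from the eigenvalue $\lambda_m$, and conclude via a pseudo-index/piercing argument (Theorems \ref{Theorem 1.1} and \ref{Theorem 6}). But your central estimate contains a genuine error: to bound $E_\lambda(tu)$ from above you need a \emph{lower} bound on $\int_\Omega |u|^{p^\ast}\,dx$, whereas the Sobolev inequality you invoke gives an \emph{upper} bound ($\int_\Omega |u|^{p^\ast}\,dx \le S^{-p^\ast/p}$ when $\int_\Omega |\nabla u|^p\,dx = 1$); inserting it with a minus sign, as in your displayed inequality, goes the wrong way, and in fact no uniform positive lower bound for $\int_\Omega |u|^{p^\ast}\,dx$ holds on the whole unit sphere of $W^{1,\,p}_0(\Omega)$ (rapidly oscillating functions of unit gradient norm make it arbitrarily small). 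The ingredient that actually works — and that produces the third term in \eqref{1.2} — is H\"older on the index-$m$ set: for $u$ with $\int_\Omega |u|^p\,dx \ge 1/\lambda_m$ one has $\int_\Omega |u|^{p^\ast}\,dx \ge \vol{\Omega}^{-p/(N-p)} \big(\int_\Omega |u|^p\,dx\big)^{p^\ast/p} \ge \vol{\Omega}^{-p/(N-p)}\, \lambda_m^{-p^\ast/p}$, which after the substitution $\tau = tR/\lambda_m^{1/p}$ yields exactly the term $\tau^{p^\ast - r}/\big(p^\ast \vol{\Omega}^{p/(N-p)}\big)$ in \eqref{1.2}; the constant $S$ enters \eqref{1.2} only through $c^\ast$, as $-S^{N/p}/(N\tau^r)$. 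With your $S^{-p^\ast/p}$ normalization, even if the direction were repaired, the resulting threshold would not match \eqref{1.2}.

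A secondary gap: you ``fix a symmetric subset $M$ of the unit sphere with index $\ge m$ on which $\int_\Omega |u|^p \ge 1/\lambda_m$''. The definition \eqref{6} of $\lambda_m$ as an infimum only supplies, for each $\varepsilon > 0$, sets of index $\ge m$ with $\sup \Psi \le \lambda_m + \varepsilon$, and the abstract scheme moreover requires the set to be \emph{compact} (the admissible classes $\A_j^\ast$ consist of compact sets, and the piercing argument needs the cone $X$ and hence $\gamma(X)$ compact). The paper gets this from the quoted attainment result, Theorem \ref{Theorem 2.3}: after increasing $m$ so that $\lambda_m < \lambda_{m+1}$, the sublevel set $\Psi^{\lambda_m}$ contains a compact symmetric subset of index $m$. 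You should either cite that result or carry an $\varepsilon$ through and use the strict inequality in \eqref{1.2}. The remaining steps of your outline (origin a strict local minimizer since $p < r < p^\ast$, \PS{c} for $c < S^{N/p}/N$, pseudo-index with the piercing property giving $m$ pairs of positive critical levels below $c^\ast$) do agree with the paper's argument.
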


\begin{remark} \label{Remark 1.2}
We note that
\[
\frac{\lambda_m}{p \tau^{r - p}} - \frac{S^{N/p}}{N \tau^r} - \frac{\tau^{p^\ast - r}}{p^\ast \vol{\Omega}^{p/(N - p)}} \to - \infty \text{ as } \tau \to 0, \infty
\]
and hence the supremum in \eqref{1.2} is finite. This supremum goes to infinity as $m \to \infty$ since $\lambda_m \to \infty$.
\end{remark}

\begin{remark}
The case $r = p$ of problem \eqref{1.1} has been widely studied in the literature (see, e.g., \cite{MR1741848,MR2514055,MR956567,MR1009077} and the references therein).
\end{remark}

As another application of our abstract multiplicity results, consider the critical $(p,q)$-Laplacian problem
\begin{equation} \label{1}
\left\{\begin{aligned}
- \Delta_p u - \Delta_q u & = \lambda\, |u|^{r - 2}\, u + |u|^{p^\ast - 2}\, u && \text{in } \Omega\\[10pt]
u & = 0 && \text{on } \bdry{\Omega},
\end{aligned}\right.
\end{equation}
where $1 < q < p < N$, $p \le r < p^\ast$, and $\lambda > 0$ is a parameter. We have the following theorem.

\begin{theorem} \label{Theorem 1.4}
Problem \eqref{1} has $m$ distinct pairs of nontrivial solutions $\pm u^\lambda_1,\dots,\pm u^\lambda_m$ for all
\begin{equation} \label{2}
\lambda > r \vol{\Omega}^{r/p - 1}\, \sup_{\tau > 0} \left[\frac{\lambda_m}{p \tau^{r - p}} + \frac{\vol{\Omega}^{1 - q/p}}{q \tau^{r - q}}\, \lambda_m^{q/p} - \frac{S^{N/p}}{N \tau^r} - \frac{\tau^{p^\ast - r}}{p^\ast \vol{\Omega}^{p/(N - p)}}\right]
\end{equation}
in each of the following cases:
\begin{enumroman}
\item \label{Theorem 1.4.i} $p < r < p^\ast$,
\item \label{Theorem 1.4.ii} $r = p \le q^\ast$.
\end{enumroman}
In particular, the number of solutions of problem \eqref{1} goes to infinity as $\lambda \to \infty$ in these cases.
\end{theorem}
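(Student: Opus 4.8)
The natural variational space for \eqref{1} is $W := W^{1,\,p}_0(\Omega)$, since $W^{1,\,p}_0(\Omega) \incl W^{1,\,q}_0(\Omega)$ when $\Omega$ is bounded and $q < p$; on $W$ we use the equivalent norm $u \mapsto \norm[p]{\nabla u}$. Weak solutions of \eqref{1} are precisely the critical points of the even $C^1$ functional
\[
E_\lambda(u) = \frac1p \int_\Omega |\nabla u|^p\, dx + \frac1q \int_\Omega |\nabla u|^q\, dx - \frac{\lambda}{r} \int_\Omega |u|^r\, dx - \frac{1}{p^\ast} \int_\Omega |u|^{p^\ast} dx,
\]
which has $E_\lambda(0) = 0$. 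The plan is to apply one of our abstract multiplicity theorems; its hypotheses reduce here to three items: \textbf{(a)} $E_\lambda$ satisfies \PS{c} for $0 < c < c^\ast := S^{N/p}/N$; \textbf{(b)} $E_\lambda$ is bounded below by a positive constant on a small symmetric sphere $S_\rho := \set{u \in W : \norm[p]{\nabla u} = \rho}$; \textbf{(c)} there is a symmetric set of $\Z_2$-cohomological index at least $m$ on which $\sup E_\lambda < c^\ast$. Item \textbf{(c)} is where the explicit lower bound \eqref{2} on $\lambda$ enters.

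For \textbf{(a)}, a \PS{c} sequence $\seq{u_n}$ is bounded: when $p < r < p^\ast$ this is immediate from $E_\lambda(u_n) - \tfrac1r\, \langle E_\lambda'(u_n), u_n\rangle \ge (\tfrac1p - \tfrac1r)\, \norm[p]{\nabla u_n}^p$, since $q < p \le r < p^\ast$ makes all three nonlinear coefficients positive; when $r = p$ one instead rescales $w_n = u_n/\norm[p]{\nabla u_n}$ and uses that the critical term dominates to reach a contradiction with $\norm[p]{\nabla w_n} = 1$. Passing to a subsequence with $u_n \rightharpoonup u$, the $(S_+)$ property of $-\Delta_p - \Delta_q$ and the compactness of the lower-order and subcritical terms show $u$ is a critical point, and at any critical point $E_\lambda(u) = (\tfrac1p - \tfrac1r)\, \norm[p]{\nabla u}^p + (\tfrac1q - \tfrac1r)\, \norm[q]{\nabla u}^q + (\tfrac1r - \tfrac1{p^\ast})\, \norm[p^\ast]{u}^{p^\ast} \ge 0$. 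A Br\'ezis--Lieb/concentration analysis of $v_n := u_n - u$ then yields either $v_n \to 0$ in $W$, or $\ell := \lim \int_\Omega |\nabla v_n|^p\, dx = \lim \int_\Omega |v_n|^{p^\ast}\, dx \ge S^{N/p}$ (the $q$-term contributes nothing to the concentrated mass, a Sobolev bubble having vanishing $W^{1,\,q}$ Dirichlet energy in the limit), so $c = E_\lambda(u) + \ell/N \ge c^\ast$, a contradiction. Hence \PS{c} holds for $0 < c < c^\ast$.

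For \textbf{(c)}, recall from \cite{MR1998432} the sequence $\seq{\lambda_m}$ of eigenvalues of $-\Delta_p$ on $W$ defined via the cohomological index (see \eqref{6}). Since \eqref{2} is strict and (as in Remark~\ref{Remark 1.2}) the supremum on its right-hand side is finite and attained, I may fix a closed symmetric set $M$ with $i(M) \ge m$, normalized so that $\int_\Omega |v|^p\, dx = 1$ and $\int_\Omega |\nabla v|^p\, dx \le \lambda_m$ for all $v \in M$ (absorbing any small error from non-attainment of the infimum defining $\lambda_m$). Let $C := \set{\tau v : \tau \ge 0,\ v \in M}$, a symmetric cone; its radial projection onto $S_\rho$ is an odd map, so $i(C \cap S_\rho) \ge i(M) \ge m$. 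For $v \in M$ and $\tau > 0$, H\"older's inequality gives $\norm[q]{\nabla v}^q \le \vol{\Omega}^{1 - q/p}\, \lambda_m^{q/p}$, $\norm[r]{v}^r \ge \vol{\Omega}^{1 - r/p}$ (using $r \ge p$), and $\norm[p^\ast]{v}^{p^\ast} \ge \vol{\Omega}^{-p/(N - p)}$, whence
\[
E_\lambda(\tau v) \le \frac{\lambda_m}{p}\, \tau^p + \frac{\vol{\Omega}^{1 - q/p}\, \lambda_m^{q/p}}{q}\, \tau^q - \frac{\lambda}{r\, \vol{\Omega}^{r/p - 1}}\, \tau^r - \frac{1}{p^\ast\, \vol{\Omega}^{p/(N - p)}}\, \tau^{p^\ast} =: f(\tau).
\]
Dividing by $\tau^r$ and rearranging shows that \eqref{2} is exactly the statement $\sup_{\tau > 0} f(\tau) < c^\ast$; hence $\sup_C E_\lambda \le \sup_{\tau > 0} f(\tau) < c^\ast$.

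Finally, \textbf{(b)}: on $S_\rho$ for $\rho$ small, in case~\ref{Theorem 1.4.i} the terms $\norm[r]{u}^r$ and $\norm[p^\ast]{u}^{p^\ast}$ are of order higher than $p$, so $E_\lambda(u) \ge \tfrac1p\, \rho^p - \o(\rho^p) > 0$; in case~\ref{Theorem 1.4.ii}, where $r = p$, the hypothesis $p \le q^\ast$ and the embedding $W^{1,\,q}_0(\Omega) \hookrightarrow L^{q^\ast}(\Omega)$ give $\norm[p]{u} \le c_0\, \norm[q]{\nabla u}$, so $\tfrac1q\, \norm[q]{\nabla u}^q - \tfrac{\lambda}{p}\, \norm[p]{u}^p \ge \norm[q]{\nabla u}^q \bigl(\tfrac1q - c_1 \lambda\, \norm[q]{\nabla u}^{p - q}\bigr) > 0$ for $\rho$ small (depending on $\lambda$), the critical term again being of higher order; thus $\inf_{S_\rho} E_\lambda =: b > 0$, and $\sup_C E_\lambda \ge b$ since $C$ meets $S_\rho$. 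With \textbf{(a)}, \textbf{(b)}, \textbf{(c)} verified, the abstract multiplicity theorem --- whose proof relies on the piercing property of the cohomological index --- yields $m$ distinct pairs $\pm u^\lambda_1, \dots, \pm u^\lambda_m$ of critical points of $E_\lambda$ with $E_\lambda(u^\lambda_j) \in [\,b,\, \sup_C E_\lambda\,] \incl (0, c^\ast)$; being at positive levels while $E_\lambda(0) = 0$, these are nontrivial. The last assertion follows since, by Remark~\ref{Remark 1.2}, the right-hand side of \eqref{2} is finite for each $m$ and $\to \infty$ as $m \to \infty$ (because $\lambda_m \to \infty$), so the largest admissible $m$ for a given $\lambda$ grows unboundedly as $\lambda \to \infty$.

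I expect the crux to be items \textbf{(a)} and \textbf{(b)} in case~\ref{Theorem 1.4.ii}: for $r = p$ and large $\lambda$ the $p$-Laplacian part alone provides neither boundedness of \PS{} sequences nor local positivity once $\lambda$ exceeds the first eigenvalue, so one must lean on the $q$-Laplacian term and the hypothesis $p \le q^\ast$ (and, for \textbf{(a)}, on the dominance of the critical term); matching the level estimate to the precise constants in \eqref{2} is then only bookkeeping.
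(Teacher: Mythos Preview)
Your proposal is correct and follows essentially the same route as the paper: the paper simply packages your steps \textbf{(a)}--\textbf{(c)} into the abstract Theorems~\ref{Theorem 6} and~\ref{Theorem 5} (applied in cases~\ref{Theorem 1.4.i} and~\ref{Theorem 1.4.ii} respectively) and then reads off the constants $\alpha = \vol{\Omega}^{1-q/p}$, $\beta = \vol{\Omega}^{1-r/p}$, $\gamma = \vol{\Omega}^{-p/(N-p)}$, $c^\ast = S^{N/p}/N$ from the very same H\"older estimates \eqref{3.1}--\eqref{3.3} that you derive directly. One small point to tighten: your test set $M$ must be \emph{compact} (not merely closed) for the pseudo-index machinery to apply---this is exactly what Theorem~\ref{Theorem 2.3} supplies after assuming without loss that $\lambda_m < \lambda_{m+1}$---and the cone over $M$ should be truncated at a large radius $R$ so that the hypothesis $\sup_A E \le 0$ of Theorem~\ref{Theorem 1.1} is met, which holds since your $f(\tau) \to -\infty$ as $\tau \to \infty$.
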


\begin{remark}
As in Remark \ref{Remark 1.2}, the supremum in \eqref{2} is finite and goes to infinity as $m \to \infty$.
\end{remark}

\begin{remark}
The existence of a nontrivial solution of problem \eqref{1} for all sufficiently large $\lambda > 0$ when $p < r < p^\ast$ was proved in Yin and Yang \cite{MR2890966}. Sufficient conditions for the existence of a nontrivial solution when $r = p$ and $\lambda > 0$ is either small or large were given in Candito et al.\! \cite{MR3415031}.
\end{remark}

The reason that eigenvalues based on the cohomological index appear in the estimates in Theorems \ref{Theorem 1} and \ref{Theorem 1.4} is that our abstract multiplicity results involve the cohomological index. We recall that this index is defined as follows. Let $W$ be a Banach space and let $\A$ denote the class of symmetric subsets of $W \setminus \set{0}$. For $A \in \A$, let $\overline{A} = A/\Z_2$ be the quotient space of $A$ with each $u$ and $-u$ identified, let $f : \overline{A} \to \RP^\infty$ be the classifying map of $\overline{A}$, and let $f^\ast : H^\ast(\RP^\infty) \to H^\ast(\overline{A})$ be the induced homomorphism of the Alexander-Spanier cohomology rings. The cohomological index of $A$ is defined by
\[
i(A) = \begin{cases}
\sup \set{m \ge 1 : f^\ast(\omega^{m-1}) \ne 0} & \text{if } A \ne \emptyset\\[5pt]
0 & \text{if } A = \emptyset,
\end{cases}
\]
where $\omega \in H^1(\RP^\infty)$ is the generator of the polynomial ring $H^\ast(\RP^\infty) = \Z_2[\omega]$. For example, the classifying map of the unit sphere $S^{m-1}$ in $\R^m,\, m \ge 1$ is the inclusion $\RP^{m-1} \incl \RP^\infty$, which induces isomorphisms on $H^q$ for $q \le m - 1$, so $i(S^{m-1}) = m$.

The cohomological index has the following so called piercing property, which is not shared by the Krasnoselskii’s genus. If $A, A_0, A_1 \in \A$ are closed and $\varphi : A \times [0,1] \to A_0 \cup A_1$ is a continuous map such that $\varphi(-u,t) = - \varphi(u,t)$ for all $(u,t) \in A \times [0,1]$, $\varphi(A \times [0,1])$ is closed, $\varphi(A \times \set{0}) \subset A_0$, and $\varphi(A \times \set{1}) \subset A_1$, then
\[
i(\varphi(A \times [0,1]) \cap A_0 \cap A_1) \ge i(A)
\]
(see \cite[Proposition (3.9)]{MR0478189}). The proofs of our abstract multiplicity results will make essential use of this special property of the cohomological index.

\section{Abstract multiplicity theorems}

In this section we state and prove our abstract multiplicity theorems. Let $W$ be a Banach space and let $E \in C^1(W,\R)$ be an even functional, i.e., $E(-u) = E(u)$ for all $u \in E$. Assume that there exists $c^\ast > 0$ such that for all $c \in (0,c^\ast)$, $E$ satisfies the \PS{c} condition, i.e., every sequence $\seq{u_j}$ in $W$ such that $E(u_j) \to c$ and $E'(u_j) \to 0$ has a strongly convergent subsequence. Let $\A^\ast$ denote the class of symmetric subsets of $W$ and let $\Gamma$ denote the group of odd homeomorphisms of $W$ that are the identity outside the set $\set{u \in W : 0 < E(u) < c^\ast}$. For $r > 0$, the pseudo-index of $M \in \A^\ast$ related to $i$, $S_r = \set{u \in W : \norm{u} = r}$, and $\Gamma$ is defined by
\[
i^\ast(M) = \min_{\gamma \in \Gamma}\, i(\gamma(M) \cap S_r)
\]
(see Benci \cite{MR84c:58014}). First we prove the following theorem.

\begin{theorem} \label{Theorem 1.1}
Let $C$ be a compact symmetric subset of $S = \set{u \in W : \norm{u} = 1}$ with $i(C) = m \ge 1$. Assume that the origin is a strict local minimizer of $E$ and that there exists $R > 0$ such that
\begin{equation} \label{2.2}
\sup_{u \in A}\, E(u) \le 0, \qquad \sup_{u \in X}\, E(u) < c^\ast,
\end{equation}
where $A = \set{Ru : u \in C}$ and $X = \set{tu : u \in A,\, 0 \le t \le 1}$. Let $0 < r < R$ be so small that
\begin{equation} \label{3}
\inf_{u \in S_r}\, E(u) > 0,
\end{equation}
let $\A_j^\ast = \set{M \in \A^\ast : M \text{ is compact and } i^\ast(M) \ge j}$, and set
\[
c_j^\ast := \inf_{M \in \A_j^\ast}\, \sup_{u \in M}\, E(u), \quad j = 1,\dots,m.
\]
Then $0 < c_1^\ast \le \dotsb \le c_m^\ast < c^\ast$, each $c_j^\ast$ is a critical value of $E$, and $E$ has $m$ distinct pairs of associated critical points.
\end{theorem}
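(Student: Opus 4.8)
The plan is to realize the values $c_j^\ast$ as min-max levels by a standard pseudo-index argument à la Benci, using the hypotheses to trap these levels strictly between $0$ and $c^\ast$ so that the Palais--Smale condition applies. First I would verify the ordering $c_1^\ast \le \dots \le c_m^\ast$, which is immediate from the inclusion $\A_{j+1}^\ast \subset \A_j^\ast$. The real content is the two-sided bound $0 < c_1^\ast$ and $c_m^\ast < c^\ast$. For the lower bound: I claim $S_r \in \A_1^\ast$, indeed $i^\ast(S_r) \ge 1$ because for any $\gamma \in \Gamma$ the set $\gamma(S_r) \cap S_r$ is nonempty (an odd map of a sphere cannot miss it, or more simply one invokes that $\gamma$ restricted near $S_r$ is a homeomorphism); then every $M \in \A_1^\ast$ satisfies $\sup_M E \ge \inf_{S_r} E > 0$ by \eqref{3}, once one knows $\gamma(M) \cap S_r \ne \emptyset$ for all $\gamma$, hence $M \cap \gamma^{-1}(S_r) \ne \emptyset$ and $\sup_M E \ge \inf_{\gamma^{-1}(S_r)} E = \inf_{S_r} E$ since $\gamma$ is the identity where $E \le 0$ or $E \ge c^\ast$ and preserves $E$... — more carefully, since we only need a lower bound, I use that $\gamma(M)\cap S_r\neq\emptyset$ for all $\gamma\in\Gamma$ forces $\sup_{\gamma(M)}E\geq\inf_{S_r}E>0$, and $\sup_{\gamma(M)}E=\sup_M E$ is false in general, so instead: pick the specific point $u\in M$ with $\gamma(u)\in S_r$; either $\gamma(u)=u$ (if $E(u)\notin(0,c^\ast)$, but then $E(u)\geq c^\ast>0$ or $E(u)\leq 0$, the latter contradicting $\gamma(u)\in S_r$ together with \eqref{3} only if... ) — this needs care and I would instead follow Benci's original bookkeeping directly.

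For the upper bound $c_m^\ast < c^\ast$ I would exhibit a single competitor $M \in \A_m^\ast$ with $\sup_M E < c^\ast$. The natural candidate is $M = X = \set{tu : u \in A,\, 0 \le t \le 1}$, the cone over $A = RC$. By \eqref{2.2} we have $\sup_X E < c^\ast$, so it remains to prove $i^\ast(X) \ge m$, i.e.\ $i(\gamma(X) \cap S_r) \ge m$ for every $\gamma \in \Gamma$. This is exactly where the piercing property of the cohomological index enters, and it is the main obstacle. The idea: $X$ is parametrized by $C \times [0,1] \to W$, $(u,t) \mapsto tRu$, an odd map carrying $C \times \set{0}$ into the set $A_0 := \set{u : \norm{u} \le r}\setminus\set 0$ (the punctured inner ball) and $C \times \set{1} = A$ into $A_1 := \set{u : \norm{u} \ge r}$, with the image $X \setminus \set 0$ closed and symmetric. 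Composing with $\gamma$ (which fixes the origin's neighborhood and $A$, since $E \le 0$ on $A$ and $E > 0$ near the origin scaled appropriately — one checks $\gamma$ fixes points where $E \le 0$) gives an odd homotopy $\varphi : C \times [0,1] \to A_0 \cup A_1$ landing in $A_0$ at $t=0$ and $A_1$ at $t=1$; the piercing property then yields $i(\varphi(C \times [0,1]) \cap A_0 \cap A_1) \ge i(C) = m$, and $A_0 \cap A_1 \subset S_r$ (both conditions $\norm u \le r$ and $\norm u \ge r$), while $\varphi(C \times [0,1]) \subset \gamma(X)$, so $i(\gamma(X) \cap S_r) \ge m$ by monotonicity of the index. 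I must check the technical points that $\gamma$ is the identity on $A$ and on the small inner sphere (both lie outside $\set{0 < E < c^\ast}$, using \eqref{2.2} and \eqref{3} together with the strict-local-minimizer hypothesis to control $E$ on $X$ near $0$), and that $\varphi(C\times[0,1])$ is closed (image of a compact set).

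Finally, with $0 < c_1^\ast \le \dots \le c_m^\ast < c^\ast$ established, the standard deformation argument for pseudo-index theories (Benci \cite{MR84c:58014}) applies: each $c_j^\ast$ is a critical value because, if not, the deformation lemma — valid since $E$ satisfies \PS{c_j^\ast} — would produce an odd homeomorphism in $\Gamma$ pushing a near-optimal $M$ below $c_j^\ast$ while keeping $i^\ast \ge j$, a contradiction. When several of the $c_j^\ast$ coincide, the usual argument shows the critical set at that level has cohomological index at least equal to the multiplicity, hence contains infinitely many points, in particular at least the required number of distinct pairs $\pm u$; summing over distinct values gives $m$ pairs in total. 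I expect the homotopy-and-piercing step for the upper bound to be the crux; the lower bound and the deformation/criticality step are routine once the pseudo-index framework is set up, and I would cite Benci for the latter rather than reprove it.
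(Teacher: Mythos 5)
Your main step is exactly the paper's argument: for the upper bound you take the competitor $X$, the cone over $A = \set{Ru : u \in C}$, note $\sup_X E < c^\ast$ by \eqref{2.2}, and for each $\gamma \in \Gamma$ apply the piercing property to the odd map $(u,t) \mapsto \gamma(tRu)$, using that $\gamma$ fixes $A$ (because $E \le 0$ there by \eqref{2.2}) and $\gamma(0) = 0$ (by oddness), to conclude $i(\gamma(X) \cap S_r) \ge i(C) = m$, hence $i^\ast(X) \ge m$, $X \in \A_m^\ast$, and $c_m^\ast \le \sup_X E < c^\ast$; the criticality and multiplicity statements are then delegated to the standard pseudo-index machinery, just as the paper does (it cites \cite[Proposition 3.42]{MR2640827} rather than Benci \cite{MR84c:58014}, but this is the same routine step).

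Two points need repair. First, your lower-bound paragraph dissolves into an unresolved case analysis and ends by deferring to ``Benci's bookkeeping,'' yet the argument is one line and you already had the ingredient: since the identity map belongs to $\Gamma$, for every $M \in \A_1^\ast$ one has $i(M \cap S_r) \ge i^\ast(M) \ge 1$, so $M \cap S_r \ne \emptyset$ and $\sup_M E \ge \inf_{S_r} E > 0$ by \eqref{3}; there is no need to compare $\sup_{\gamma(M)} E$ with $\sup_M E$ or to track where a general $\gamma$ is the identity. Your side claim that $S_r \in \A_1^\ast$ is both unnecessary and false in general, since the elements of $\A_1^\ast$ are required to be compact and $S_r$ need not be. Second, in the piercing step you declare that $C \times \set{0}$ is carried into the punctured ball $\set{u : 0 < \norm{u} \le r}$, but $\varphi(u,0) = \gamma(0) = 0$, so the bottom of the cone is sent to the origin itself and misses your $A_0$; take $A_0 = \set{u \in W : \norm{u} \le r}$ (as the paper does), the relevant intersection $A_0 \cap A_1 = S_r$ being unchanged. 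With these two repairs your proof coincides with the paper's.
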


\begin{proof}
For each $M \in \A_1^\ast$, since the identity map is in $\Gamma$,
\[
i(M \cap S_r) \ge i^\ast(M) \ge 1
\]
and hence $M \cap S_r \ne \emptyset$. It follows that
\[
c_1^\ast \ge \inf_{u \in S_r}\, E(u) > 0
\]
by \eqref{3}.

For $\gamma \in \Gamma$, consider the continuous map
\[
\varphi : A \times [0,1] \to W, \quad (u,t) \mapsto \gamma(tu).
\]
We have $\varphi(A \times [0,1]) = \gamma(X)$, which is compact. Since $\gamma$ is odd,
\[
\varphi(-u,t) = \gamma(-tu) = - \gamma(tu) = - \varphi(u,t) \quad \forall (u,t) \in A \times [0,1]
\]
and
\[
\varphi(A \times \set{0}) = \set{\gamma(0)} = \set{0}.
\]
Since $E \le 0$ on $A$ by \eqref{2.2}, $\gamma$ is the identity on $A$ and hence $\varphi(A \times \set{1}) = A$. Thus, applying the piercing property with
\[
A_0 = \set{u \in W : \norm{u} \le r}, \quad A_1 = \set{u \in W : \norm{u} \ge r}
\]
gives
\[
i(\gamma(X) \cap S_r) = i(\varphi(A \times [0,1]) \cap A_0 \cap A_1) \ge i(A) = i(C) \ge m,
\]
where the second equality holds since the mapping $C \to A,\, u \mapsto Ru$ is an odd homeomorphism. It follows that $i^\ast(X) \ge m$, so $X \in \A_m^\ast$ and hence
\[
c_m^\ast \le \sup_{u \in X}\, E(u) < c^\ast
\]
by \eqref{2.2}.

The rest now follows from standard arguments in critical point theory (see, e.g., Perera et al.\! \cite[Proposition 3.42]{MR2640827}).
\end{proof}

Next we briefly recall the construction of eigenvalues based on the cohomological index for $p$-Laplacian type operators in an abstract setting introduced in Perera \cite{MR4293883} (eigenvalues based on the cohomological index for the $p$-Laplacian were first introduced in Perera \cite{MR1998432}). Let $W$ be a uniformly convex Banach space with the dual $W^\ast$ and duality pairing $\dualp{\cdot}{\cdot}$. Recall that $f \in C(W,W^\ast)$ is a potential operator if there is a functional $F \in C^1(W,\R)$, called a potential for $f$, such that $F' = f$. We consider the nonlinear eigenvalue problem
\begin{equation} \label{1.3}
\Ap[u] = \lambda \Bp[u] \quad \text{in } W^\ast,
\end{equation}
where $\Ap, \Bp \in C(W,W^\ast)$ are potential operators satisfying the following hypotheses:
\begin{enumerate}
\item[$(A_1)$] $\Ap$ is $(p - 1)$-homogeneous and odd for some $p \in (1,\infty)$, i.e., $\Ap[(tu)] = |t|^{p - 2}\, t\, \Ap[u]$ for all $u \in W$ and $t \in \R$,
\item[$(A_2)$] $\dualp{\Ap[u]}{v} \le \norm{u}^{p - 1} \norm{v}$ for all $u, v \in W$, and equality holds if and only if $\alpha u = \beta v$ for some $\alpha, \beta \ge 0$, not both zero (in particular, $\dualp{\Ap[u]}{u} = \norm{u}^p$ for all $u \in W$),
\item[$(B_1)$] $\Bp$ is $(p - 1)$-homogeneous and odd, i.e., $\Bp[(tu)] = |t|^{p - 2}\, t\, \Bp[u]$ for all $u \in W$ and $t \in \R$,
\item[$(B_2)$] $\dualp{\Bp[u]}{u} > 0$ for all $u \in W \setminus \set{0}$, and $\dualp{\Bp[u]}{v} \le \dualp{\Bp[u]}{u}^{(p-1)/p} \dualp{\Bp[v]}{v}^{1/p}$ for all $u, v \in W$,
\item[$(B_3)$] $\Bp$ is a compact operator.
\end{enumerate}

Eigenvalues of problem \eqref{1.3} coincide with critical values of the $C^1$-functional
\begin{equation} \label{2.3}
\Psi(u) = \frac{1}{p\, J_p(u)}, \quad u \in S = \set{u \in W : p\, I_p(u) = 1},
\end{equation}
where
\[
I_p(u) = \frac{1}{p} \dualp{\Ap[u]}{u} = \frac{1}{p} \norm{u}^p, \quad J_p(u) = \frac{1}{p} \dualp{\Bp[u]}{u}
\]
are the potentials of $\Ap$ and $\Bp$ satisfying $I_p(0) = J_p(0) = 0$, respectively (see Perera \cite[Proposition 3.1]{MR4293883}). The following theorem was proved in Perera et al.\! \cite{MR2640827}.

\begin{theorem}[{\cite[Theorem 4.6]{MR2640827}}]
Let $\F$ be the class of symmetric subsets of $S$, let $\F_m = \set{M \in \F : i(M) \ge m}$, and set
\begin{equation} \label{6}
\lambda_m := \inf_{M \in \F_m}\, \sup_{u \in M}\, \Psi(u), \quad m \ge 1.
\end{equation}
Then
\begin{equation} \label{2.4}
\lambda_1 = \inf_{u \in S}\, \Psi(u) > 0
\end{equation}
is the first eigenvalue and $\lambda_1 \le \lambda_2 \le \cdots$ is an unbounded sequence of eigenvalues of the problem \eqref{1.3}.
\end{theorem}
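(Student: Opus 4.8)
The plan is to run a Ljusternik--Schnirelman minimax scheme for the even $C^1$-functional $\Psi$ of \eqref{2.3} on the manifold $S$, using the cohomological index $i$ in the role normally played by the genus. First I would record the elementary facts. By $(B_2)$ the potential $J_p$ is positive on $W \setminus \set{0}$, and since $\Bp$ is compact by $(B_3)$ it is bounded on the bounded set $S$, so $\dualp{\Bp[u]}{u} \le \norm{\Bp[u]}\, \norm{u}$ is bounded on $S$; hence $\inf_S \Psi = 1/(p\, \sup_S J_p) > 0$. Uniform convexity makes $W$ reflexive, $J_p$ is weakly continuous on $S$ by $(B_3)$, so this infimum is attained, and a constrained minimum of a $C^1$ functional on a $C^1$ manifold is a critical point; thus $\lambda_1 = \inf_S \Psi > 0$ is the smallest critical value of $\Psi$, equivalently the smallest eigenvalue of \eqref{1.3}, which is \eqref{2.4}. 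Monotonicity $\lambda_m \le \lambda_{m+1}$ is immediate from $\F_{m+1} \subset \F_m$, and each $\F_m$ is nonempty: any $m$-dimensional subspace $V \incl W$ meets $S$ in a set oddly homeomorphic, via radial retraction, to $S^{m-1}$, so $i(S \cap V) = m$; in particular $\lambda_m < \infty$ for every $m$.

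Next I would verify that $\Psi$ satisfies \PS{c} on $S$ for every $c > 0$; this is where the structural hypotheses $(A_1)$--$(B_3)$ all enter at once. A \PS{c} sequence $\seq{u_j} \incl S$ is automatically bounded ($\norm{u_j} = 1$), so after a subsequence $u_j \rightharpoonup u$ in $W$ and $\Bp[u_j] \to \Bp[u]$ in $W^\ast$ by $(B_3)$; then $\dualp{\Bp[u_j]}{u_j} = 1/\Psi(u_j) \to 1/c$, and in the limit $\dualp{\Bp[u]}{u} = 1/c > 0$, so $u \ne 0$. Writing the constrained derivative condition $\norm{\Psi'|_S(u_j)} \to 0$ through a Lagrange multiplier (as in \cite[Proposition 3.1]{MR4293883}) shows $\Ap[u_j] \to c\, \Bp[u] =: w^\ast$ in $W^\ast$; since $\dualp{\Ap[u_j]}{v} \le \norm{v}$ for $\norm{v} = 1$ by $(A_2)$ we get $\norm{w^\ast} \le 1$, and since $\dualp{\Ap[u_j]}{u_j} = 1$ we get $\dualp{w^\ast}{u} = 1$, whence $1 = \dualp{w^\ast}{u} \le \norm{u} \le \liminf \norm{u_j} = 1$, i.e.\ $\norm{u} = 1 = \norm{u_j}$. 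Combined with $u_j \rightharpoonup u$ and the uniform convexity of $W$ (Kadec--Klee property), this forces $u_j \to u$ strongly; $u \in S$ with $\Psi(u) = c$, so \PS{c} holds (and the limit $c$ is a genuine eigenvalue).

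With compactness established, standard critical point theory (the equivariant quantitative deformation lemma on $S$, invoked as in \cite[Proposition 3.42]{MR2640827}) shows each $\lambda_m$ is a critical value: if some $\lambda_m$ were regular, a deformation would give an odd homeomorphism $\eta$ of $S$ with $\eta(\Psi^{\lambda_m + \varepsilon}) \incl \Psi^{\lambda_m - \varepsilon}$, where $\Psi^c := \set{u \in S : \Psi(u) \le c}$; choosing $M \in \F_m$ with $\sup_M \Psi < \lambda_m + \varepsilon$, the set $\eta(M)$ is compact, symmetric, and satisfies $i(\eta(M)) \ge i(M) \ge m$ by monotonicity of $i$ under odd continuous maps, so $\eta(M) \in \F_m$ while $\sup_{\eta(M)} \Psi \le \lambda_m - \varepsilon$, contradicting the definition of $\lambda_m$. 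The same deformation, with subadditivity of $i$ and its continuity near compact sets, gives, when $\lambda_m = \dots = \lambda_{m+k-1} = c$, the bound $i(K_c) \ge k$ for $K_c := \set{u \in S : \Psi(u) = c,\ \Psi'|_S(u) = 0}$, so $\Psi$ indeed has $m$ distinct pairs of critical points.

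Finally, for unboundedness, suppose $c^\ast := \sup_m \lambda_m < \infty$. Using \PS{c} over the compact parameter range $(0, c^\ast]$ one checks that $K := \set{u \in S : \Psi'|_S(u) = 0,\ \Psi(u) \le c^\ast}$ is compact; a compact symmetric subset of $W \setminus \set{0}$ has finite cohomological index (its classifying map has compact image, hence lands in some $\RP^k$), so $N := i(K) < \infty$, and in particular $i(K_{c^\ast}) \le N$. Since $\seq{\lambda_m}$ is nondecreasing with limit $c^\ast$, a deformation around the level $c^\ast$ provides $\varepsilon > 0$ and a symmetric neighborhood $U$ of $K_{c^\ast}$ with $i(\overline{U}) = i(K_{c^\ast}) \le N$ pushing $\Psi^{c^\ast + \varepsilon} \setminus U$ into $\Psi^{c^\ast - \varepsilon}$; pick $m_0$ with $\lambda_{m_0} > c^\ast - \varepsilon$. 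As $\lambda_{m_0 + N} \le c^\ast$, there is $M \in \F_{m_0 + N}$ with $M \incl \Psi^{c^\ast + \varepsilon}$, so $i(M \setminus U) \ge i(M) - i(\overline{U}) \ge m_0$, and applying the deformation to the compact symmetric set $M \setminus U$ produces a member of $\F_{m_0}$ contained in $\Psi^{c^\ast - \varepsilon}$, forcing $\lambda_{m_0} \le c^\ast - \varepsilon$ --- a contradiction. (If infinitely many $\lambda_m$ equal $c^\ast$, then $i(K_{c^\ast}) = \infty$ by the previous paragraph, again contradicting its compactness.) Hence $\lambda_m \to \infty$. I expect the main obstacle to be this last step --- coordinating the deformation, the choice of $U$, and the subadditivity of $i$ --- together with the verification of \PS{c} from $(A_1)$--$(B_3)$, which, while by now routine, is the one place where every structural hypothesis is used simultaneously.
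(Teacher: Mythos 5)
This theorem is not proved in the paper at all --- it is imported verbatim by citation from \cite[Theorem 4.6]{MR2640827} --- and your reconstruction follows essentially the same standard route as that source: a Ljusternik--Schnirelman minimax over index-$\ge m$ sets, the Palais--Smale verification via compactness of $\Bp$ plus the Kadec--Klee property of the uniformly convex norm, and deformation plus monotonicity/subadditivity/continuity of $i$ for criticality and unboundedness. The outline is essentially correct; only routine points need polishing (identifying the weak limit $w=\Bp[u]$ from compactness of the potential operator, using $p$-homogeneity of $J_p$ since $S$ is not weakly closed when attaining $\lambda_1$, and taking closures where subadditivity of $i$ is invoked).
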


Moreover, the following theorem was proved in Perera \cite{MR4293883}.

\begin{theorem}[{\cite[Theorem 1.3]{MR4293883}}] \label{Theorem 2.3}
If $\lambda_m < \lambda_{m+1}$, then the sublevel set
\[
\Psi^{\lambda_m} = \set{u \in S : \Psi(u) \le \lambda_m}
\]
has a compact symmetric subset $C$ of index $m$.
\end{theorem}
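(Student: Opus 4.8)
The plan is to push the minimax value $\lambda_m$ down to the sublevel set $\Psi^{\lambda_m}$ itself, exploiting the gap $\lambda_m<\lambda_{m+1}$ together with the Palais--Smale condition and the continuity of the cohomological index. As a preliminary, note that $\Psi$ is even and, because $\Bp$ is compact by $(B_3)$, for every $c>0$ the functional $\Psi$ satisfies the \PS{c} condition on the manifold $S$, so the negative pseudo-gradient deformation machinery is available. Now fix $\epsilon\in(0,\lambda_{m+1}-\lambda_m)$. First I would show
\[
i\bigl(\Psi^{\lambda_m+\epsilon}\bigr)=m .
\]
The inequality $\ge m$ is immediate from \eqref{6}: there is an $M\in\F_m$ with $\sup_M\Psi<\lambda_m+\epsilon$, hence $M\incl\Psi^{\lambda_m+\epsilon}$, and the index is monotone under inclusion. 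For the reverse inequality, if $i(\Psi^{\lambda_m+\epsilon})\ge m+1$ then $\Psi^{\lambda_m+\epsilon}\in\F_{m+1}$, whence $\lambda_{m+1}\le\sup_{\Psi^{\lambda_m+\epsilon}}\Psi\le\lambda_m+\epsilon<\lambda_{m+1}$, a contradiction. In particular $i(\Psi^{\lambda_m})\le m$, since $\Psi^{\lambda_m}\incl\Psi^{\lambda_m+\epsilon}$.

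The substance of the proof is to realize this index by a \emph{compact} symmetric subset of $\Psi^{\lambda_m}$, and I would do it in two stages. Using the continuity property of the cohomological index --- a consequence of the tautness of Alexander--Spanier cohomology, and precisely the feature the genus lacks --- one may take the minimizing sets in \eqref{6} to be compact: for each small $\delta>0$ there is a compact symmetric $M_\delta$ with $i(M_\delta)=m$ and $M_\delta\incl\Psi^{\lambda_m+\delta}$. If some $M_\delta$ already lies in $\Psi^{\lambda_m}$ we are done; otherwise I would push $M_\delta$ downward by an odd (i.e.\ $\Z_2$-equivariant) negative pseudo-gradient deformation $\eta$ of $\Psi$. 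Using $\lambda_m<\lambda_{m+1}$, the \PS{\lambda_m}-compactness of the critical set $K_{\lambda_m}$ at level $\lambda_m$, and a deformation lemma near the critical level $\lambda_m$, one obtains such an $\eta$ with $\eta\bigl(\Psi^{\lambda_m+\delta},1\bigr)\incl\Psi^{\lambda_m}$ for $\delta$ small. Then $C:=\eta(M_\delta,1)$ is a compact symmetric subset of $\Psi^{\lambda_m}$, and since $\eta(\cdot,1)$ is odd, $i(C)\ge i(M_\delta)=m$; combined with $i(C)\le i(\Psi^{\lambda_m})\le m$ this gives $i(C)=m$.

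The step I expect to be the main obstacle is the deformation. Because $\lambda_m$ is itself a critical value, one cannot deformation-retract $\Psi^{\lambda_m+\delta}$ onto $\Psi^{\lambda_m}$ simply by flowing for a bounded time; one must combine the \PS{\lambda_m}-compactness of $K_{\lambda_m}$ with a quantitative deformation lemma and a careful choice of $\delta$ --- and, delicately, control any critical values in $(\lambda_m,\lambda_m+\delta]$ that might cluster at $\lambda_m$ --- all while keeping the deformation odd so that the index inequalities survive; it is exactly the hypothesis $\lambda_m<\lambda_{m+1}$ that is supposed to remove the obstruction here. The one further non-routine ingredient, the passage from a closed symmetric set to a compact symmetric subset of the same index, is again a manifestation of the continuity of the cohomological index, and is the reason the conclusion can assert that $C$ is compact.
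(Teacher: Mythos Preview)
The paper does not supply a proof of Theorem~\ref{Theorem 2.3}; the result is quoted verbatim from \cite[Theorem~1.3]{MR4293883} and then used as a black box in the proofs of Theorems~\ref{Theorem 6} and~\ref{Theorem 5}. There is therefore nothing in this paper to compare your proposal against; for the actual argument you must consult \cite{MR4293883}.

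As to your sketch itself: the outline (compute $i(\Psi^{\lambda_m+\epsilon})=m$ from \eqref{6} and the gap, produce a compact symmetric $M_\delta$ of index $m$ inside $\Psi^{\lambda_m+\delta}$ via the continuity property of the cohomological index, then push it into $\Psi^{\lambda_m}$ by an odd pseudo-gradient deformation) is the natural one, and your identification of the deformation step as the crux is correct. One caution, however: you write that ``it is exactly the hypothesis $\lambda_m<\lambda_{m+1}$ that is supposed to remove the obstruction here,'' but this gap concerns only the minimax levels defined in \eqref{6} and does not by itself rule out \emph{other} critical values of $\Psi$ lying in $(\lambda_m,\lambda_{m+1})$. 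Hence the hypothesis does not automatically hand you a $\delta>0$ with no critical values in $(\lambda_m,\lambda_m+\delta]$, and closedness of the critical-value set (which is all that \PS{} gives) does not exclude accumulation at $\lambda_m$ from above. Resolving this---whether by a sharper deformation lemma that lands in $\Psi^{\lambda_m}$ even across nearby critical levels, or by an argument that bypasses the flow entirely---is precisely what the cited reference must supply, and your proposal as written leaves that step open.
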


Now we consider the equation
\begin{equation} \label{16}
\Ap[u] + f(u) = \lambda\, g(u) + h(u) \quad \text{in } W^\ast,
\end{equation}
where $f, g, h \in C(W,W^\ast)$ are odd potential operators and $\lambda > 0$ is a parameter. We assume that the potentials $F$, $G$, and $H$ of $f$, $g$, and $h$ with $F(0) = G(0) = H(0) = 0$, respectively, satisfy the following hypotheses:
\begin{enumerate}
\item[$(H_1)$] for some constants $\alpha > 0$ and $1 < q < p$,
    \[
    0 \le F(u) \le \frac{\alpha}{q}\, \big(p\, I_p(u)\big)^{q/p} \quad \forall u \in W
    \]
\item[$(H_2)$] for some constants $\beta > 0$ and $r > p$,
    \[
    G(u) \ge \frac{\beta}{r}\, \big(p\, J_p(u)\big)^{r/p} \quad \forall u \in W
    \]
\item[$(H_3)$] $G(u) = \o(\norm{u}^p)$ as $u \to 0$,
\item[$(H_4)$] for some constants $\gamma > 0$ and $p^\ast > r$,
    \[
    H(u) \ge \frac{\gamma}{p^\ast}\, \big(p\, J_p(u)\big)^{p^\ast/p} \quad \forall u \in W
    \]
\item[$(H_5)$] $H(u) = \o(\norm{u}^p)$ as $u \to 0$,
\item[$(H_6)$] there exists $c^\ast > 0$ such that the variational functional
    \[
    E(u) = I_p(u) + F(u) - \lambda\, G(u) - H(u), \quad u \in W
    \]
    associated with equation \eqref{16} satisfies the \PS{c} condition for all $c \in (0,c^\ast)$.
\end{enumerate}
We have the following theorem.

\begin{theorem} \label{Theorem 6}
Assume $(H_1)$--$(H_6)$. Then equation \eqref{16} has $m$ distinct pairs of nontrivial solutions $\pm u^\lambda_1,\dots,\pm u^\lambda_m$ for all
\begin{equation} \label{17}
\lambda > \frac{r}{\beta}\, \sup_{\tau > 0} \left[\frac{\lambda_m}{p \tau^{r - p}} + \frac{\alpha}{q \tau^{r - q}}\, \lambda_m^{q/p} - \frac{c^\ast}{\tau^r} - \frac{\gamma \tau^{p^\ast - r}}{p^\ast}\right].
\end{equation}
\end{theorem}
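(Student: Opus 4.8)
The plan is to derive Theorem~\ref{Theorem 6} from the abstract multiplicity result Theorem~\ref{Theorem 1.1}, applied to the even functional $E(u) = I_p(u) + F(u) - \lambda\, G(u) - H(u)$ of $(H_6)$ with $c^\ast$ the constant furnished there. Here $E \in C^1(W,\R)$ is even because $I_p(u) = \tfrac1p\norm{u}^p$ is even and $F, G, H$ are potentials vanishing at $0$ of the odd operators $f, g, h$, hence even; and $(H_6)$ is exactly the \PS{c} assumption of Theorem~\ref{Theorem 1.1}. For $u$ near the origin, $E(u) \ge I_p(u) - \lambda\, G(u) - H(u) = \tfrac1p\norm{u}^p - \o(\norm{u}^p)$ by $F \ge 0$ (from $(H_1)$) together with $(H_3)$ and $(H_5)$, so the origin is a strict local minimizer and, by the same estimate, $\inf_{\norm{u} = \rho} E > 0$ for all small $\rho > 0$, which is what \eqref{3} requires.

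Next I would pin down the compact symmetric set $C$ needed in Theorem~\ref{Theorem 1.1}. Since $\seq{\lambda_j}$ is unbounded it cannot be eventually constant, so there is an integer $k \ge 0$ with $\lambda_m = \lambda_{m+k} < \lambda_{m+k+1}$; Theorem~\ref{Theorem 2.3} then provides a compact symmetric $C \subset \Psi^{\lambda_m} = \Psi^{\lambda_{m+k}}$ with $i(C) = m + k \ge m$. Note that $C \subset S = \set{u : p\, I_p(u) = 1} = \set{u : \norm{u} = 1}$, so $C$ sits on the unit sphere as needed, and Theorem~\ref{Theorem 1.1} applied to such a $C$ still yields at least $m$ pairs of critical points. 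By construction $\Psi(u) \le \lambda_m$, i.e.\ $p\, J_p(u) \ge 1/\lambda_m$, for every $u \in C$.

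The only substantial step is to exhibit $R > 0$ satisfying \eqref{2.2} for $A = \set{Ru : u \in C}$ and $X = \set{su : u \in C,\ 0 \le s \le R}$ (both compact, $C$ being compact). By the $(p-1)$-homogeneity of $\Ap$ and $\Bp$ in $(A_1)$ and $(B_1)$, for $v = su$ with $u \in C$ and $s \ge 0$ one has $p\, I_p(v) = s^p$ and $p\, J_p(v) = s^p\, p\, J_p(u) \ge s^p/\lambda_m$; substituting these into the upper bound $(H_1)$ for $F$ and the lower bounds $(H_2)$, $(H_4)$ for $G$ and $H$ gives
\[
E(v) \le \frac{s^p}{p} + \frac{\alpha}{q}\, s^q - \frac{\lambda \beta\, s^r}{r\, \lambda_m^{r/p}} - \frac{\gamma\, s^{p^\ast}}{p^\ast\, \lambda_m^{p^\ast/p}} =: \phi(s).
\]
Since $q < p < r < p^\ast$, $\phi(s) \to -\infty$ as $s \to \infty$, so I can fix $R$ so large that $\phi(R) \le 0$ and $R > \rho$; then $\sup_A E \le \phi(R) \le 0$ and $\sup_X E \le \sup_{s>0}\phi(s)$, so \eqref{2.2} holds as soon as $\sup_{s>0}\phi(s) < c^\ast$. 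Finally, dividing the inequality $\phi(s) < c^\ast$ by $\beta s^r/(r\,\lambda_m^{r/p}) > 0$ and substituting $s = \lambda_m^{1/p}\,\tau$ transforms it --- valid for all $s > 0$ --- into precisely the bound \eqref{17}; the supremum in \eqref{17} is finite because, again by $q < p < r < p^\ast$, the bracket tends to $-\infty$ both as $\tau \to 0$ and as $\tau \to \infty$. Theorem~\ref{Theorem 1.1} then produces critical values $0 < c_1^\ast \le \dots \le c_m^\ast < c^\ast$ of $E$ together with $m$ distinct pairs of associated critical points; these are nontrivial (since $E(u^\lambda_j) = c_j^\ast > 0 = E(0)$) solutions of \eqref{16} (as $E'(u) = \Ap[u] + f(u) - \lambda\, g(u) - h(u)$).

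The argument is a direct application of Theorem~\ref{Theorem 1.1}, so I expect no genuine obstacle; the only delicate point is verifying that the natural sufficient condition $\sup_{s>0}\phi(s) < c^\ast$ coincides exactly with \eqref{17} after the homogeneity rescaling and the substitution $s = \lambda_m^{1/p}\tau$, the chain $q < p < r < p^\ast$ built into $(H_1)$--$(H_4)$ being precisely what guarantees both that a suitable $R$ exists and that the supremum in \eqref{17} is finite.
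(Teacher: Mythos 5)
Your proposal is correct and follows essentially the same route as the paper: verify the strict local minimum from $(H_1)$, $(H_3)$, $(H_5)$, pass to $\lambda_m=\lambda_{m+k}<\lambda_{m+k+1}$ to get the compact symmetric set $C\subset\Psi^{\lambda_m}$ from Theorem \ref{Theorem 2.3}, bound $E$ on rays through $C$ using the homogeneity of $I_p,J_p$ and $(H_1)$, $(H_2)$, $(H_4)$, and convert the resulting one-variable inequality into \eqref{17} via the substitution $s=\lambda_m^{1/p}\tau$. The only point to tighten is the passage from the pointwise bound $\phi(s)<c^\ast$ to $\sup_X E<c^\ast$: since $X$ only involves $0\le s\le R$ and $\phi$ is continuous with $\phi(0)=0<c^\ast$, the supremum is attained, which is exactly the role of the paper's $\tau_0$.
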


\begin{proof}
We apply Theorem \ref{Theorem 1.1}. By $(H_1)$, $(H_3)$, and $(H_5)$,
\[
E(u) \ge \frac{1}{p} \norm{u}^p + \o(\norm{u}^p) \text{ as } u \to 0,
\]
so the origin is a strict local minimizer of $E$.

By increasing $m$ if necessary, we may assume that $\lambda_m < \lambda_{m+1}$. Then $\Psi^{\lambda_m}$ has a compact symmetric subset $C$ of index $m$ by Theorem \ref{Theorem 2.3}. Let $R > 0$ and let $A$ and $X$ be as in Theorem \ref{Theorem 1.1}. For $u \in C \subset \Psi^{\lambda_m}$, $(H_1)$, $(H_2)$, $(H_4)$, and \eqref{2.3} give
\begin{gather*}
F(Ru) \le \frac{\alpha R^q}{q}\, \big(p\, I_p(u)\big)^{q/p} = \frac{\alpha R^q}{q},\\[5pt]
G(Ru) \ge \frac{\beta R^r}{r}\, \big(p\, J_p(u)\big)^{r/p} = \frac{\beta R^r}{r\, \Psi(u)^{r/p}} \ge \frac{\beta R^r}{r\, \lambda_m^{r/p}},\\[5pt]
H(Ru) \ge \frac{\gamma R^{p^\ast}}{p^\ast}\, \big(p\, J_p(u)\big)^{p^\ast/p} = \frac{\gamma R^{p^\ast}}{p^\ast\, \Psi(u)^{p^\ast/p}} \ge \frac{\gamma R^{p^\ast}}{p^\ast \lambda_m^{p^\ast/p}}.
\end{gather*}
So
\begin{equation} \label{19}
E(Ru) = \frac{R^p}{p} + F(Ru) - \lambda\, G(Ru) - H(Ru) \le \frac{R^p}{p} + \frac{\alpha R^q}{q} - \frac{\lambda \beta R^r}{r\, \lambda_m^{r/p}} - \frac{\gamma R^{p^\ast}}{p^\ast \lambda_m^{p^\ast/p}}.
\end{equation}
Since $\gamma > 0$ and $p^\ast > r > p > q$, it follows from this that the first inequality in \eqref{2.2} holds if $R$ is sufficiently large.

For $u \in A$ and $0 \le t \le 1$, \eqref{19} gives
\begin{equation} \label{20}
E(tu) \le \frac{t^p R^p}{p} + \frac{\alpha t^q R^q}{q} - \frac{\lambda \beta t^r R^r}{r\, \lambda_m^{r/p}} - \frac{\gamma t^{p^\ast} R^{p^\ast}}{p^\ast \lambda_m^{p^\ast/p}} = \frac{\tau^p}{p}\, \lambda_m + \frac{\alpha \tau^q}{q}\, \lambda_m^{q/p} - \frac{\lambda \beta \tau^r}{r} - \frac{\gamma \tau^{p^\ast}}{p^\ast},
\end{equation}
where $\tau = tR/\lambda_m^{1/p}$. The supremum of the last expression over all $\tau \ge 0$ is attained at some $\tau_0 > 0$. Then \eqref{20} gives
\begin{equation} \label{21}
\sup_{u \in X}\, E(u) \le \frac{\tau_0^p}{p}\, \lambda_m + \frac{\alpha \tau_0^q}{q}\, \lambda_m^{q/p} - \frac{\lambda \beta \tau_0^r}{r} - \frac{\gamma \tau_0^{p^\ast}}{p^\ast}.
\end{equation}
We have
\[
\nu_m := \frac{r}{\beta}\, \sup_{\tau > 0} \left[\frac{\lambda_m}{p \tau^{r - p}} + \frac{\alpha}{q \tau^{r - q}}\, \lambda_m^{q/p} - \frac{c^\ast}{\tau^r} - \frac{\gamma \tau^{p^\ast - r}}{p^\ast}\right] \ge \frac{r}{\beta} \left[\frac{\lambda_m}{p \tau_0^{r - p}} + \frac{\alpha}{q \tau_0^{r - q}}\, \lambda_m^{q/p} - \frac{c^\ast}{\tau_0^r} - \frac{\gamma \tau_0^{p^\ast - r}}{p^\ast}\right]
\]
and hence
\begin{equation} \label{22}
\frac{\tau_0^p}{p}\, \lambda_m + \frac{\alpha \tau_0^q}{q}\, \lambda_m^{q/p} - \frac{\nu_m \beta \tau_0^r}{r} - \frac{\gamma \tau_0^{p^\ast}}{p^\ast} \le c^\ast.
\end{equation}
Since $\lambda > \nu_m$ by \eqref{17}, it follows from \eqref{21} and \eqref{22} that the second inequality in \eqref{2.2} also holds. Theorem \ref{Theorem 1.1} now gives $m$ distinct pairs of nontrivial critical points of $E$.
\end{proof}

Finally we consider the equation
\begin{equation} \label{2.10}
\Ap[u] + f(u) = \lambda \Bp[u] + h(u) \quad \text{in } W^\ast,
\end{equation}
where $f, h \in C(W,W^\ast)$ are odd potential operators and $\lambda > 0$ is a parameter. We assume that the potentials $F$ and $H$ of $f$ and $h$ with $F(0) = H(0) = 0$, respectively, satisfy the following hypotheses:
\begin{enumerate}
\item[$(H_1)$] for some constants $\alpha_0, \alpha > 0$ and $1 < q < p$,
    \[
    \frac{\alpha_0}{q}\, \big(p\, J_p(u)\big)^{q/p} \le F(u) \le \frac{\alpha}{q}\, \big(p\, I_p(u)\big)^{q/p} \quad \forall u \in W,
    \]
\item[$(H_2)$] for some constants $\gamma > 0$ and $p^\ast > p$,
    \[
    H(u) \ge \frac{\gamma}{p^\ast}\, \big(p\, J_p(u)\big)^{p^\ast/p} \quad \forall u \in W,
    \]
\item[$(H_3)$] $H(u) = \o(\norm{u}^p)$ as $u \to 0$,
\item[$(H_4)$] there exists $c^\ast > 0$ such that the variational functional
    \[
    E(u) = I_p(u) + F(u) - \lambda J_p(u) - H(u), \quad u \in W
    \]
    associated with equation \eqref{2.10} satisfies the \PS{c} condition for all $c \in (0,c^\ast)$.
\end{enumerate}
We have the following theorem.

\begin{theorem} \label{Theorem 5}
Assume $(H_1)$--$(H_4)$. Then equation \eqref{2.10} has $m$ distinct pairs of nontrivial solutions $\pm u^\lambda_1,\dots,\pm u^\lambda_m$ for all
\begin{equation} \label{2.8}
\lambda > \lambda_m + p\, \sup_{\tau > 0} \left[\frac{\alpha}{q \tau^{p - q}}\, \lambda_m^{q/p} - \frac{c^\ast}{\tau^p} - \frac{\gamma \tau^{p^\ast - p}}{p^\ast}\right].
\end{equation}
\end{theorem}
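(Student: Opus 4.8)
The plan is to apply Theorem \ref{Theorem 1.1} to the functional $E(u) = I_p(u) + F(u) - \lambda\, J_p(u) - H(u)$, following the scheme of the proof of Theorem \ref{Theorem 6} with the term $\lambda\, G$ there now played by $\lambda\, J_p$. Almost every step carries over verbatim; the one genuinely new point is the verification that the origin is a strict local minimizer of $E$. In Theorem \ref{Theorem 6} the perturbation $\lambda\, G(u)$ was $\o(\norm{u}^p)$ by hypothesis, whereas here the term $\lambda\, J_p(u)$, with $J_p$ $p$-homogeneous, is genuinely of order $\norm{u}^p$ near the origin. This is the step I expect to require the most care, and it is precisely why hypothesis $(H_1)$ has been strengthened here to include a lower bound on $F$.

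For the strict local minimizer I would argue as follows. Put $s = p\, J_p(u)$; then $s \ge 0$, $s \to 0$ as $u \to 0$ by continuity of $J_p$, and $s > 0$ for $u \ne 0$ by $(B_2)$. The lower estimate in $(H_1)$ gives
\[
F(u) - \lambda\, J_p(u) \ge \frac{\alpha_0}{q}\, s^{q/p} - \frac{\lambda}{p}\, s = s^{q/p}\left(\frac{\alpha_0}{q} - \frac{\lambda}{p}\, s^{1 - q/p}\right) \ge 0
\]
for all $u$ in a neighborhood of the origin, since $q < p$. Combined with $(H_3)$ this yields $E(u) \ge \frac{1}{p}\, \norm{u}^p + \o(\norm{u}^p)$ as $u \to 0$, so the origin is a strict local minimizer of $E$ and \eqref{3} holds for all sufficiently small $r > 0$.

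For the linking geometry I would argue as in the proof of Theorem \ref{Theorem 6}. After increasing $m$ if necessary so that $\lambda_m < \lambda_{m+1}$ — possible since $\seq{\lambda_m}$ is unbounded, and harmless since the right-hand side of \eqref{2.8} depends on $m$ only through $\lambda_m$ — Theorem \ref{Theorem 2.3} gives a compact symmetric $C \incl \Psi^{\lambda_m}$ with $i(C) = m$; thus $\norm{u} = 1$ and $p\, J_p(u) \ge 1/\lambda_m$ for $u \in C$. With $A$ and $X$ as in Theorem \ref{Theorem 1.1}, the homogeneity of $I_p$ and $J_p$, the upper bound in $(H_1)$, and hypothesis $(H_2)$ give, for $u \in C$ and $0 \le t \le 1$,
\[
E(tRu) \le \frac{\tau^p}{p}\, \lambda_m + \frac{\alpha\, \tau^q}{q}\, \lambda_m^{q/p} - \frac{\lambda\, \tau^p}{p} - \frac{\gamma\, \tau^{p^\ast}}{p^\ast} =: \phi(\tau), \qquad \tau = \frac{tR}{\lambda_m^{1/p}},
\]
the $J_p$ term now contributing the power $\lambda\, \tau^p/p$ in place of $\lambda \beta \tau^r/r$. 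Since $\alpha, \gamma > 0$ and $q < p < p^\ast$, $\phi$ vanishes at $0$, is positive for small $\tau > 0$, and tends to $-\infty$ as $\tau \to \infty$, so it attains its maximum over $[0,\infty)$ at some $\tau_0 > 0$. Because $\tau$ depends on $u$ only through $R$ on $C$, choosing $R$ large makes $\sup_{u \in A} E(u) \le \phi(R/\lambda_m^{1/p}) \le 0$; and for every $R > 0$ we have $\sup_{u \in X} E(u) \le \sup_{\tau \ge 0}\phi(\tau) = \phi(\tau_0)$. An elementary rearrangement shows that $\phi(\tau_0) < c^\ast$ is equivalent to $\lambda > \lambda_m + p\left[\frac{\alpha}{q\, \tau_0^{p-q}}\, \lambda_m^{q/p} - \frac{c^\ast}{\tau_0^p} - \frac{\gamma\, \tau_0^{p^\ast - p}}{p^\ast}\right]$, whose right-hand side is at most that of \eqref{2.8}; hence \eqref{2.8} forces $\phi(\tau_0) < c^\ast$, so \eqref{2.2} holds for this $R$. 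Choosing also $0 < r < R$ small enough for \eqref{3}, Theorem \ref{Theorem 1.1} then produces $m$ distinct pairs of nontrivial critical points of $E$, i.e.\ of solutions of \eqref{2.10}.

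The remaining ingredients require no new work: the pseudo-index and piercing-property machinery is internal to Theorem \ref{Theorem 1.1}, and the needed compactness is exactly hypothesis $(H_4)$. Thus the only place the argument genuinely departs from the proof of Theorem \ref{Theorem 6} is the strict-local-minimizer step, where the $p$-homogeneity of $J_p$ prevents $\lambda\, J_p$ from being of lower order at the origin and the lower bound on $F$ in the modified $(H_1)$ becomes indispensable.
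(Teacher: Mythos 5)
Your proposal is correct and follows essentially the same route as the paper's proof: apply Theorem \ref{Theorem 1.1} with the compact set $C \incl \Psi^{\lambda_m}$ given by Theorem \ref{Theorem 2.3} (after arranging $\lambda_m < \lambda_{m+1}$), the substitution $\tau = tR/\lambda_m^{1/p}$, and the same rearrangement showing that \eqref{2.8} forces $\sup_X E \le \phi(\tau_0) < c^\ast$ while large $R$ gives $\sup_A E \le 0$. The only (harmless) deviation is in the strict-local-minimizer step: the paper uses the lower bound in $(H_1)$ together with the inequality $a^2 - ab \ge -b^2/4$ and $J_p(u) \le \norm{u}^p/(p\,\lambda_1)$, whereas you absorb $\lambda J_p$ into $F$ directly from $p\,J_p(u) \to 0$ as $u \to 0$ and $q < p$; both arguments are valid.
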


\begin{proof}
We apply Theorem \ref{Theorem 1.1}. First we show that the origin is a strict local minimizer of $E$. By $(H_1)$ and the elementary inequality $a^2 - ab \ge - b^2/4$ for all $a, b \in \R$,
\[
F(u) - \lambda J_p(u) \ge \frac{\alpha_0}{q}\, \big(p\, J_p(u)\big)^{q/p} - \lambda J_p(u) \ge - \frac{q\, \lambda^2}{4p^2\, \alpha_0}\, \big(p\, J_p(u)\big)^{2 - q/p}.
\]
Since $J_p(u) \le I_p(u)/\lambda_1 = \norm{u}^p/p\, \lambda_1$ by \eqref{2.4}, this together with $(H_3)$ gives
\[
E(u) \ge \frac{1}{p} \left(1 - \frac{q\, \lambda^2}{4p\, \alpha_0\, \lambda_1^{2 - q/p}}\, \norm{u}^{p - q} + \o(1)\right) \norm{u}^p \text{ as } u \to 0,
\]
from which the desired conclusion follows since $q < p$.

By increasing $m$ if necessary, we may assume that $\lambda_m < \lambda_{m+1}$. Then $\Psi^{\lambda_m}$ has a compact symmetric subset $C$ of index $m$ by Theorem \ref{Theorem 2.3}. Let $R > 0$ and let $A$ and $X$ be as in Theorem \ref{Theorem 1.1}. For $u \in S$,
\begin{equation} \label{2.9}
E(Ru) = R^p\, \big(I_p(u) - \lambda J_p(u)\big) + F(Ru) - H(Ru) = \frac{R^p}{p} \left(1 - \frac{\lambda}{\Psi(u)}\right) + F(Ru) - H(Ru)
\end{equation}
by \eqref{2.3}. For $u \in C \subset \Psi^{\lambda_m}$, $(H_1)$, $(H_2)$, and \eqref{2.3} give
\begin{gather*}
F(Ru) \le \frac{\alpha R^q}{q}\, \big(p\, I_p(u)\big)^{q/p} = \frac{\alpha R^q}{q},\\[5pt]
H(Ru) \ge \frac{\gamma R^{p^\ast}}{p^\ast}\, \big(p\, J_p(u)\big)^{p^\ast/p} = \frac{\gamma R^{p^\ast}}{p^\ast\, \Psi(u)^{p^\ast/p}} \ge \frac{\gamma R^{p^\ast}}{p^\ast \lambda_m^{p^\ast/p}}.
\end{gather*}
These inequalities together with \eqref{2.9} give
\begin{equation} \label{2.13}
E(Ru) \le \frac{R^p}{p} \left(1 - \frac{\lambda}{\lambda_m}\right) + \frac{\alpha R^q}{q} - \frac{\gamma R^{p^\ast}}{p^\ast \lambda_m^{p^\ast/p}}.
\end{equation}
Since $\gamma > 0$ and $p^\ast > p > q$, it follows from this that the first inequality in \eqref{2.2} holds if $R$ is sufficiently large.

For $u \in A$ and $0 \le t \le 1$, \eqref{2.13} gives
\begin{equation} \label{2.11}
E(tu) \le \frac{t^p R^p}{p} \left(1 - \frac{\lambda}{\lambda_m}\right) + \frac{\alpha t^q R^q}{q} - \frac{\gamma t^{p^\ast} R^{p^\ast}}{p^\ast \lambda_m^{p^\ast/p}} = \frac{\tau^p}{p}\, (\lambda_m - \lambda) + \frac{\alpha \tau^q}{q}\, \lambda_m^{q/p} - \frac{\gamma \tau^{p^\ast}}{p^\ast},
\end{equation}
where $\tau = tR/\lambda_m^{1/p}$. The supremum of the last expression over all $\tau \ge 0$ is attained at some $\tau_0 > 0$. Then \eqref{2.11} gives
\begin{equation} \label{2.14}
\sup_{u \in X}\, E(u) \le \frac{\tau_0^p}{p}\, (\lambda_m - \lambda) + \frac{\alpha \tau_0^q}{q}\, \lambda_m^{q/p} - \frac{\gamma \tau_0^{p^\ast}}{p^\ast}.
\end{equation}
We have
\[
\nu_m := p\, \sup_{\tau > 0} \left[\frac{\alpha}{q \tau^{p - q}}\, \lambda_m^{q/p} - \frac{c^\ast}{\tau^p} - \frac{\gamma \tau^{p^\ast - p}}{p^\ast}\right] \ge p \left[\frac{\alpha}{q \tau_0^{p - q}}\, \lambda_m^{q/p} - \frac{c^\ast}{\tau_0^p} - \frac{\gamma \tau_0^{p^\ast - p}}{p^\ast}\right]
\]
and hence
\begin{equation} \label{2.12}
- \frac{\tau_0^p}{p}\, \nu_m + \frac{\alpha \tau_0^q}{q}\, \lambda_m^{q/p} - \frac{\gamma \tau_0^{p^\ast}}{p^\ast} \le c^\ast.
\end{equation}
Since $\lambda_m - \lambda < - \nu_m$ by \eqref{2.8}, it follows from \eqref{2.14} and \eqref{2.12} that the second inequality in \eqref{2.2} also holds. Theorem \ref{Theorem 1.1} now gives $m$ distinct pairs of nontrivial critical points of $E$.
\end{proof}

\section{Applications}

In this section we prove Theorems \ref{Theorem 1} and \ref{Theorem 1.4}. We apply Theorems \ref{Theorem 6} and \ref{Theorem 5} with $W = W^{1,\,p}_0(\Omega)$ and the operators $\Ap, \Bp, f, g, h \in C(W^{1,\,p}_0(\Omega),W^{-1,\,p'}(\Omega))$ given by
\[
\dualp{\Ap[u]}{v} = \int_\Omega |\nabla u|^{p - 2}\, \nabla u \cdot \nabla v\, dx, \quad \dualp{\Bp[u]}{v} = \int_\Omega |u|^{p - 2}\, uv\, dx
\]
and
\[
\dualp{f(u)}{v} = \int_\Omega |\nabla u|^{q-2}\, \nabla u \cdot \nabla v\, dx, \quad \dualp{g(u)}{v} = \int_\Omega |u|^{r - 2}\, uv\, dx, \quad \dualp{h(u)}{v} = \int_\Omega |u|^{p^\ast - 2}\, uv\, dx
\]
for $u, v \in W^{1,\,p}_0(\Omega)$. The corresponding potentials are given by
\[
I_p(u) = \frac{1}{p} \int_\Omega |\nabla u|^p\, dx, \quad J_p(u) = \frac{1}{p} \int_\Omega |u|^p\, dx
\]
and
\[
F(u) = \frac{1}{q} \int_\Omega |\nabla u|^q\, dx, \quad G(u) = \frac{1}{r} \int_\Omega |u|^r\, dx, \quad H(u) = \frac{1}{p^\ast} \int_\Omega |u|^{p^\ast}\, dx,
\]
respectively. It follows from the H\"{o}lder inequality that
\begin{gather}
\label{3.1} F(u) \le \frac{\vol{\Omega}^{1 - q/p}}{q} \left(\int_\Omega |\nabla u|^p\, dx\right)^{q/p} = \frac{\vol{\Omega}^{1 - q/p}}{q}\, \big(p\, I_p(u)\big)^{q/p},\\[5pt]
\label{3.2} G(u) \ge \frac{1}{r \vol{\Omega}^{r/p - 1}} \left(\int_\Omega |u|^p\, dx\right)^{r/p} = \frac{1}{r \vol{\Omega}^{r/p - 1}}\, \big(p\, J_p(u)\big)^{r/p},\\[5pt]
\label{3.3} H(u) \ge \frac{1}{p^\ast \vol{\Omega}^{p/(N - p)}} \left(\int_\Omega |u|^p\, dx\right)^{p^\ast/p} = \frac{1}{p^\ast \vol{\Omega}^{p/(N - p)}}\, \big(p\, J_p(u)\big)^{p^\ast/p}.
\end{gather}
The Sobolev inequality gives
\[
G(u) = \o(\norm{u}^p) \text{ as } u \to 0
\]
when $r > p$ and
\[
H(u) = \o(\norm{u}^p) \text{ as } u \to 0.
\]
As is well-known, the associated variational functionals
\[
E(u) = \frac{1}{p} \int_\Omega |\nabla u|^p\, dx - \frac{\lambda}{r} \int_\Omega |u|^r\, dx - \frac{1}{p^\ast} \int_\Omega |u|^{p^\ast}\, dx, \quad u \in W^{1,\,p}_0(\Omega)
\]
and
\[
E(u) = \frac{1}{p} \int_\Omega |\nabla u|^p\, dx + \frac{1}{q} \int_\Omega |\nabla u|^q\, dx - \frac{\lambda}{r} \int_\Omega |u|^r\, dx - \frac{1}{p^\ast} \int_\Omega |u|^{p^\ast}\, dx, \quad u \in W^{1,\,p}_0(\Omega)
\]
satisfy the \PS{c} condition for all
\[
c < \frac{1}{N}\, S^{N/p}
\]
when $p \le r < p^\ast$.

\begin{proof}[Proof of Theorem \ref{Theorem 1}]
We apply Theorem \ref{Theorem 6} with $f = 0$ and $\alpha = 0$. By \eqref{3.2} and \eqref{3.3}, $(H_2)$ and $(H_4)$ hold with
\[
\beta = \frac{1}{\vol{\Omega}^{r/p - 1}}, \quad \gamma = \frac{1}{\vol{\Omega}^{p/(N - p)}},
\]
respectively. The conclusion follows from Theorem \ref{Theorem 6}.
\end{proof}

\begin{proof}[Proof of Theorem \ref{Theorem 1.4}]
\ref{Theorem 1.4.i} Let $p < r < p^\ast$. We apply Theorem \ref{Theorem 6}. By \eqref{3.1}--\eqref{3.3}, $(H_1)$, $(H_2)$, and $(H_4)$ hold with
\[
\alpha = \vol{\Omega}^{1 - q/p}, \quad \beta = \frac{1}{\vol{\Omega}^{r/p - 1}}, \quad \gamma = \frac{1}{\vol{\Omega}^{p/(N - p)}},
\]
respectively. The conclusion follows from Theorem \ref{Theorem 6}.

\ref{Theorem 1.4.ii} Let $r = p \le q^\ast$. We apply Theorem \ref{Theorem 5}. Since $p \le q^\ast$, the first inequality in $(H_1)$ holds for some $\alpha_0 > 0$ by the Sobolev embedding theorem. By \eqref{3.1} and \eqref{3.3}, the second inequality in $(H_1)$ and $(H_2)$ hold with
\[
\alpha = \vol{\Omega}^{1 - q/p}, \quad \gamma = \frac{1}{\vol{\Omega}^{p/(N - p)}},
\]
respectively. The conclusion follows from Theorem \ref{Theorem 5}.
\end{proof}

\def\cprime{$''$}

\end{document}